\documentclass{amsart}
\usepackage{amsmath}
\usepackage{amsthm}
\usepackage{amsfonts}
\usepackage{amssymb}
\usepackage{mathrsfs}
\usepackage{amscd}
\usepackage{url}

\newcommand{\re}{\mathbb{R}}
\newcommand{\cpx}{\mathbb{C}}

\newcommand{\N}{\mathbb{N}}

\newcommand{\half}{\frac{1}{2}}
\newcommand{\lmd}{\lambda}

\newcommand{\eps}{\epsilon}

\def\af{\alpha}

\newcommand{\sig}{\sigma}
\newcommand{\Sig}{\Sigma}

\newcommand{\reff}[1]{(\ref{#1})}
\newcommand{\pt}{\partial}
\newcommand{\prm}{\prime}

\newcommand{\bdes}{\begin{description}}
\newcommand{\edes}{\end{description}}

\newcommand{\bal}{\begin{align}}
\newcommand{\eal}{\end{align}}

\newcommand{\bnum}{\begin{enumerate}}
\newcommand{\enum}{\end{enumerate}}

\newcommand{\bit}{\begin{itemize}}
\newcommand{\eit}{\end{itemize}}

\newcommand{\bea}{\begin{eqnarray}}
\newcommand{\eea}{\end{eqnarray}}
\newcommand{\be}{\begin{equation}}
\newcommand{\ee}{\end{equation}}

\newcommand{\baray}{\begin{array}}
\newcommand{\earay}{\end{array}}

\newcommand{\bca}{\begin{cases}}
\newcommand{\eca}{\end{cases}}

\newcommand{\bcen}{\begin{center}}
\newcommand{\ecen}{\end{center}}

\newcommand{\bbm}{\begin{bmatrix}}
\newcommand{\ebm}{\end{bmatrix}}

\newcommand{\bmx}{\begin{matrix}}
\newcommand{\emx}{\end{matrix}}

\newcommand{\bpm}{\begin{pmatrix}}
\newcommand{\epm}{\end{pmatrix}}

\newtheorem{theorem}{Theorem}[section]

\newtheorem{prop}[theorem]{Proposition}

\newtheorem{lemma}[theorem]{Lemma}

\theoremstyle{definition}

\newtheorem{exm}[theorem]{Example}


\setcounter{equation}{0}
\setcounter{subsection}{0}

\begin{document}

\title{Polynomial Optimization with Real Varieties}

\author[Jiawang Nie]{Jiawang Nie}
\address{Department of Mathematics, University of California San Diego,
9500 Gilman Drive,  La Jolla, CA 92093, USA.}

\email{njw@math.ucsd.edu}
\thanks{The research was partially supported by the NSF grant DMS-0844775.}

\begin{abstract}
We study the optimization problem
\[
\min \quad f(x) \quad s.t. \quad h(x) = 0, \,\, g(x) \geq 0
\]
with $f$ a polynomial and $h,g$ two tuples of polynomials in $x\in \re^n$.
Lasserre's hierarchy is a sequence of sum of squares relaxations
for finding the global minimum $f_{min}$.
Let $K$ be the feasible set.
We prove the following results:
i) If the real variety $V_{\re}(h)$ is finite,
then Lasserre's hierarchy has finite convergence,
no matter the complex variety $V_{\cpx}(h)$ is finite or not.
This solves an open question in Laurent's survey~\cite{Lau}.
ii) If $K$ and $V_{\re}(h)$ have
the same vanishing ideal, then the finite convergence of
Lasserre's hierarchy is independent of the choice of defining polynomials
for the real variety $V_{\re}(h)$.
iii) When $K$ is finite, a refined version of Lasserre's hierarchy
(using the preordering of $g$) has finite convergence.
\end{abstract}

\keywords{polynomials, finite convergence, Lasserre's hierarchy, real variety,
semidefinite program, sum of squares}

\subjclass{65K05, 90C22}

\maketitle

\section{Introduction}

Consider the polynomial optimization problem
\be  \label{pop:gen}
\left\{\baray{rl}
f_{min}:= \min & f(x) \\
s.t. & h_i(x) = 0 \,(i=1,\ldots,m_1),  \\
 & g_j(x) \geq  0 \,(j=1,\ldots,m_2),
\earay \right.
\ee
where $f$ and all $g_i, h_j$ are real polynomials in $x\in\re^n$.
Denote $h:=(h_1,\ldots, h_{m_1})$ and $g:=(g_1,\ldots, g_{m_2})$.
Let $K$ be the feasible set of \reff{pop:gen}.
A standard approach for solving \reff{pop:gen} globally is
{\it Lasserre's hierarchy} of sum of squares (SOS) relaxations \cite{Las01}.
We first give a short review about it.
Let $\re[x]$ be the ring of polynomials with real coefficients
and in variables $x:=(x_1,\ldots,x_n)$.
A polynomial $p$ is SOS if there exist $p_1,\ldots,p_k \in \re[x]$
such that $p=p_1^2+\cdots+p_k^2$.
Denote by $\Sig \re[x]^2$ the set of all SOS polynomials.
A subset $I$ of $\re[x]$ is an ideal if
$I+I \subseteq I$ and $I \cdot \re[x] \subseteq I$.
The tuple $h$ generates the ideal $h_1 \re[x]+\cdots + h_{m_1} \re[x]$,
which is denoted as $\langle h \rangle$.
The $2k$-th {\it truncated ideal} generated by $h$ is
\[
\langle h \rangle_{2k}: =
\left\{
\left. \overset{m_1}{ \underset{ i=1}{\sum} }  \phi_i h_i \right|
\baray{c}
\mbox{ each } \phi_i \in \re[x] \\
\mbox{ and } \deg (\phi_i h_i ) \leq 2k
\earay
\right\},
\]
and the $k$-th {\it truncated quadratic module} generated by $g$ is (denote $g_0= 1$)
\[
Q_k(g):= \left\{
\left. \overset{m_2}{ \underset{ j=0}{\sum} }   \sig_j g_j \right|
\baray{c}
\mbox{each } \sig_j \in \Sig\re[x]^2 \\
\mbox{ and } \deg( \sig_j g_j) \leq 2k
\earay
\right\}.
\]
Let $\N$ be the set of nonnegative integers.
The union $Q(g):=\cup_{k\in \N} Q_k(g)$ is
called the {\it quadratic module} generated by $g$.
Lasserre's hierarchy for \reff{pop:gen} is the sequence of SOS relaxations ($k\in \N$)
\be  \label{sos:Put}
f_k:= \, \max \quad \gamma \quad
s.t. \quad f - \gamma  \in  \langle h\rangle_{2k} + Q_k(g).
\ee
The integer $k$ in \reff{sos:Put} is called a {\it relaxation order}.
The SOS program \reff{sos:Put} is equivalent to a
semidefinite program (SDP) (cf. \cite{LasBok,Lau}).

Next, we describe the dual optimization problem of \reff{sos:Put}.
Let $y$ be a sequence indexed by $\af:=(\af_1,\ldots,\af_n)\in \N^n$
with $|\af| := \af_1+\cdots+\af_n \leq 2k$,
i.e., $y$ is a {\it truncated moment sequence} (tms) of degree $2k$.
Denote by $\mathscr{M}_{2k}$ the space of all tms' whose degrees are $2k$.
Denote by $\lceil a \rceil$ the smallest integer that is not smaller than $a$.
Denote $d_j := \lceil \deg(g_j)/2 \rceil$,
$x^\af := x_1^{\af_1}\cdots x_n^{\af_n}$ and
\[
[x]_t :=
\bbm 1 & x_1 \, \cdots \, x_n & x_1^2  & x_1 x_2 & \cdots  & x_1^t & \cdots &  x_n^t \ebm^T.
\]
For each $k\geq d_j$, expand the product $g_j [x]_{k-d_j}[x]^T_{k-d_j}$ as
\[
g_j [x]_{k-d_j}[x]^T_{k-d_j} =
\sum_{ |\af| \leq 2k}  A^{(k,j)}_\af x^\af,
\]
where each $A^{(k,j)}_\af$ is a constant symmetric matrix. The matrix
\[
L_{g_j}^{(k)}(y) :=
\sum_{ |\af| \leq 2k}  A^{(k,j)}_\af y_\af
\]
is called a {\it localizing matrix}.
For $g_0=1$, $M_k(y):=L_1^{(k)}(y)$ is called a {\it moment matrix}.
The columns and rows of $L_{g_j}^{(k)}(y)$
are indexed by vectors $\af \in \N^n$ with $|\af|\leq k-d_j$.
We refer to Laurent \cite[Section~4]{Lau} for
more details about moment and localizing matrices.
The dual optimization problem of \reff{sos:Put} is (cf. \cite{LasBok,Lau})
\be \label{mom-las:put}
\left\{\baray{rl}
f_k^*:=\underset{ y \in \mathscr{M}_{2k} }{\min} &  \langle f, y \rangle \\
\mbox{s.t.}& L_{h_i}^{(k)}(y) = 0 \,(i=1,\ldots, m_1), \, y_0 = 1, \\
           & L_{g_j}^{(k)}(y) \succeq 0 \,(j=0,1,\ldots, m_2).
\earay \right.
\ee
In the above, $X\succeq 0$ means the matrix $X$ is positive semidefinite.

Let $f_{min}, f_k, f_k^*$, respectively, be the optimal values of
\reff{pop:gen}, \reff{sos:Put} and \reff{mom-las:put}.
It is known that $f_k \leq f_k^* \leq f_{min}$ for all $k$.
The sequences $\{f_k\}$ and $\{f_k^*\}$ are both monotonically increasing.
If $K$ has nonempty interior, then \reff{mom-las:put} has an interior point,
\reff{sos:Put} achieves its optimal value
and $f_k^* = f_k$, i.e., there is no duality gap between
\reff{sos:Put} and \reff{mom-las:put} (cf. \cite{Las01}).
Under the archimedean condition (there exists $R>0$
such that $R-\sum_{i=1}^n x_i^2\in \langle h \rangle + Q(g)$),
Lasserre proved the asymptotic convergence $f_k \to f_{min}$ as $k\to \infty$.
The proof uses Putinar's Positivstellensatz \cite{Put}.
We refer to Lasserre's book \cite{LasBok}, Laurent's survey \cite{Lau}
and Marshall's book \cite{MarBk} for the work in this area.

When $f_k = f_{min}$ occurs for some $k$,
we say that Lasserre's hierarchy has {\it finite convergence}.
An appropriate criterion for checking finite convergence of
$\{f_k\}$ is {\it flat truncation}, as shown in \cite{Nie-ft}.
For the tuple $h$,
define the complex and real algebraic varieties respectively as
\be
V_{\cpx}(h) =
\{ x \in \cpx^n:\, h(x)=0  \}, \quad
V_{\re}(h) = V_{\cpx}(h) \cap \re^n.
\ee
When the complex variety $V_{\cpx}(h)$ is a finite set,
Laurent \cite{Lau07} proved that
$\{f_k\}$ has finite convergence to $f_{min}$.
When the real variety $V_{\re}(h)$ is a finite set,
Laurent \cite[Theorem~6.15]{Lau} proved that
$\{f_k^*\}$ has finite convergence to $f_{min}$.
In the case that $V_{\re}(h)$ is finite
but $V_{\cpx}(h)$ is infinite, it was unknown whether
$\{f_k\}$ has finite convergence to $f_{min}$ or not.
Indeed, Laurent \cite[Question~6.17]{Lau} asked:
\begin{center}
\begin{quote}
Does there exist an example with $|V_{\cpx}(h)|=\infty$,
$|V_{\re}(h)|<\infty$ and where
$f_k < f_{min}$ for all $k$?
\end{quote}
\end{center}
This question was also asked by Laurent
in the workshop {\sl Positive Polynomials and Optimization} (Banff, Canada, 2006),
and remained open since then, in the author's best knowledge.
Semidefinite relaxations are very useful for solving
zero-dimensional polynomial systems. We refer to \cite{LLR08,LR12}.

Our first main result is to give a negative answer
to the above question.
We prove that if $V_{\re}(h)$ is finite then $f_k=f_{min}$ for all $k$ big enough,
no matter $V_{\cpx}(h)$ is finite or not.
This is summarized as follows.

\begin{theorem}  \label{thm:fin-reVar}
Let $f_k,f_{min}$ be as above.
If the real variety $V_{\re}(h)$ is finite,
then $f_k=f_{min}$ for all $k$ big enough.
\end{theorem}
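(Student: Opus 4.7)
The plan is to show that for every $\epsilon > 0$, one can exhibit a representation
\[
f - f_{min} + \epsilon \;\in\; \langle h \rangle_{2k} + Q_k(g)
\]
with a single relaxation order $k$ independent of $\epsilon$. Since $f_k \le f_{min}$ is automatic, this will force $f_k = f_{min}$ for all such $k$. The approach combines Lagrange interpolation at the finite set $V_{\re}(h)$ with a real-Nullstellensatz-based ``buffer'' trick for the residue.

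\textbf{Interpolation.} First I would write $V_{\re}(h) = \{u_1, \dots, u_N\}$, set $c_i := f(u_i) - f_{min}$, and construct $\ell_1, \dots, \ell_N \in \re[x]$ with $\ell_i(u_j) = \delta_{ij}$ (for example, as products of affine separators of the distinct $u_i$). I then assemble $F \in Q(g)$ that matches $c_i$ on $V_{\re}(h)$: for each $i$ with $c_i \ge 0$, include $c_i \ell_i^2$ (a sum of squares); for each $i$ with $c_i < 0$, the point $u_i$ necessarily lies outside $K$, so I choose $g_{j_i}$ with $g_{j_i}(u_i) < 0$ and include $\frac{c_i}{g_{j_i}(u_i)} g_{j_i}\ell_i^2$, whose coefficient is positive so the summand lies in $Q(g)$. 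Then $F(u_i) = c_i$ for every $i$, hence $r := f - f_{min} - F$ vanishes on $V_{\re}(h)$. By the real Nullstellensatz, there exist an integer $m \ge 1$ and $s \in \Sig\re[x]^2$ with $r^{2m} + s \in \langle h \rangle$.

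\textbf{Buffer trick and conclusion.} Next I would invoke the univariate identity: for $\epsilon > 0$, the polynomial $P_\epsilon(t) := \frac{t^{2m}}{2m\epsilon^{2m-1}} + \frac{(2m-1)\epsilon}{2m} - t$ is nonnegative on $\re$ by AM--GM and has a double root at $t = \epsilon$, so it is a sum of squares in $t$. Substituting $t = -r(x)$ yields
\[
\frac{r^{2m}}{2m\epsilon^{2m-1}} + \frac{(2m-1)\epsilon}{2m} + r \;\in\; \Sig\re[x]^2.
\]
Replacing $r^{2m}$ by $-s$ modulo $\langle h \rangle$ in the first term gives $\epsilon + r \in \Sig\re[x]^2 + \langle h \rangle$. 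Only the \emph{coefficients} of this decomposition scale with $\epsilon^{-1}$; its degree is bounded by a constant depending on $\deg r$, $m$, $\deg s$, and $\deg h$ alone. Adding $F$ then yields the desired representation of $f - f_{min} + \epsilon$ at a fixed $k$. The hard part is precisely this buffer step: the real Nullstellensatz only certifies the even power $r^{2m}$, not $r$ itself, and the AM--GM identity $P_\epsilon$---with its double root at $t = \epsilon$ keeping the error sum-of-squares---is what converts an even-power certificate into one for $r$ at the price of an arbitrarily small positive shift $\epsilon$. This also explains why the SOS supremum $f_k$ equals $f_{min}$ without, in general, being attained by any specific $\gamma$.
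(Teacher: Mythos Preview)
Your proposal is correct and essentially identical to the paper's proof: both use Lagrange interpolation on $V_{\re}(h)$ to build an element of $Q(g)$ matching $f - f_{min}$ pointwise, apply the Real Nullstellensatz to the residue, and then invoke a univariate SOS ``buffer'' (the paper's Lemma packages it as $1 + t + ct^{2\ell}$ being SOS for $c$ large; your AM--GM polynomial $P_\eps$ is the same identity at the threshold value, just normalized differently) to get a certificate whose degree is independent of $\eps$. The one small omission is the degenerate case $V_{\re}(h)=\emptyset$ (so $f_{min}=+\infty$), which the paper dispatches separately via the Positivstellensatz certificate $-1 \in \langle h\rangle + \Sigma\re[x]^2$.
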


When $V_{\re}(h)$ is finite,
Theorem~\ref{thm:fin-reVar} implies that
there is no duality gap between \reff{sos:Put} and \reff{mom-las:put},
i.e., $f_k - f_k^* =0$, for $k$ big enough, because $f_k \leq f_k^* \leq f_{min}$.
This is a nice property for numerical computations.
When primal-dual interior point methods are applied to
solve semidefinite programs like \reff{sos:Put}-\reff{mom-las:put},
zero duality gap is often required.

The real variety $V_{\re}(h)$ can be defined
by different sets of polynomials, e.g.,
it can be defined by a single equation like
\[
 h_1^2(x)+\cdots+h_{m_1}^2(x)=0.
 \]
Suppose $h^{\prm} = (h_1^{\prm}, \ldots, h_r^{\prm})$ is a different tuple of
polynomials such that $V_{\re}(h^{\prm}) = V_{\re}(h)$.
Then, \reff{pop:gen} is equivalent to
\be \label{pop:h-prm}
\min \quad f(x) \quad s.t. \quad h^{\prm}(x) = 0, \quad g(x) \geq  0.
\ee
Like $\langle h\rangle_{2k}$,
we similarly define the truncated ideal $\langle h^{\prm}\rangle_{2k}$.
Then, Lasserre's hierarchy for \reff{pop:h-prm} is
the sequence of SOS relaxations ($k\in \N$)
\be  \label{las:h-prm}
f_k^{\prm}:=\max \quad \gamma \quad
s.t. \quad f - \gamma \in \langle h^{\prm} \rangle_{2k} + Q_k(g).
\ee
Similarly, we have $f_k^{\prm} \leq f_{min}$ for all $k$.
The following two questions are natural
about the two sequences $\{f_k\}$ and $\{f_k^{\prm}\}$:
\bit

\item If $\{f_k\}$ has finite convergence to $f_{min}$, does $\{f_k^{\prm}\}$
necessarily have finite convergence to $f_{min}$?

\item If $\{f_k\}$ has no finite convergence to $f_{min}$, is it possible
that $\{f_k^{\prm}\}$ has finite convergence to $f_{min}$?

\eit
%
%

When the real variety $V_{\re}(h)$ is finite, by Theorem~\ref{thm:fin-reVar},
the above two questions are solved:
the finite convergence of Lasserre's hierarchy is independent of
the choice of defining polynomials for $V_{\re}(h)$.
When $V_{\re}(h)$ is infinite, do we have a similar result?
Indeed, this is true under a general condition
on $V_{\re}(h)$ and the feasible set $K$ of \reff{pop:gen}.
The {\it vanishing ideal} of $K$ is defined as
\[
I(K):=\{p\in \re[x]:\, p(u) = 0 \, \forall \,
u \in K \}.
\]
The vanishing ideal of the real variety $V_{\re}(h)$ is
\[
I(V_{\re}(h)):=\{p\in \re[x]:\, p(u) = 0 \, \forall \,
u \in V_{\re}(h) \}.
\]
It is also called the
{\it real radical} of $\langle h \rangle$ (cf.~\cite{BCR}).

Our second main result is the following theorem.

\begin{theorem}  \label{thm:cvg-ne-h}
Let $h^{\prm} = (h_1^{\prm}, \ldots, h_r^{\prm})$
be a tuple of polynomials in $\re[x]$ such that
$V_{\re}(h)=V_{\re}(h^{\prm})$,
and $f_k,f_k^{\prm},f_{min}$ be defined as above.
Suppose $I(K) =I(V_{\re}(h))$.
Then, the sequence $\{f_k\}$ has finite convergence to $f_{min}$
if and only if $\{f_k^{\prm}\}$ has finite convergence to $f_{min}$.
\end{theorem}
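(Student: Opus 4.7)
By symmetry of $h$ and $h'$, it is enough to show that finite convergence of $\{f_k\}$ to $f_{min}$ implies finite convergence of $\{f_k'\}$. Suppose $f_k = f_{min}$ for some $k$. Unwinding the definition of $f_k$ as a supremum, for every $\epsilon>0$ there exist $\phi_i^{(\epsilon)}\in\re[x]$ and SOS $\sigma_j^{(\epsilon)}$ of degrees uniformly bounded by $2k$ with
\[
f - (f_{min}-\epsilon) = \sum_{i=1}^{m_1} \phi_i^{(\epsilon)}h_i + \sum_{j=0}^{m_2}\sigma_j^{(\epsilon)}g_j.
\]
The goal is to produce an analogous $h'$-certificate at a fixed order $k'$ independent of $\epsilon$, which forces $f_{k'}'\geq f_{min}-\epsilon$ for every $\epsilon$, hence $f_{k'}'=f_{min}$.

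The two hypotheses together force the real radicals to coincide: $\sqrt[\re]{\langle h\rangle} = I(V_\re(h)) = I(V_\re(h')) = \sqrt[\re]{\langle h'\rangle} = I(K)$. In particular, each $h_i$ lies in $\sqrt[\re]{\langle h'\rangle}$, so the Real Nullstellensatz yields, for each $i$, an integer $l_i$, an SOS $\tau_i$, and polynomials $\psi_{ij}\in\re[x]$, all fixed once and for all, with
\[
h_i^{2l_i} + \tau_i = \sum_j \psi_{ij}h_j'.
\]
These ``conversion identities'' encode the passage from $\langle h\rangle$ to $\langle h'\rangle+Q(g)$, but only on even powers of $h_i$.

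The main step is to use this data to rewrite the single polynomial $P^{(\epsilon)} := \sum_i \phi_i^{(\epsilon)}h_i$ as an element of $\langle h'\rangle+Q(g)$ at bounded degree. Two features are available: first, $P^{(\epsilon)}\in\langle h\rangle\subseteq \sqrt[\re]{\langle h'\rangle}$, so the Real Nullstellensatz applies directly to $P^{(\epsilon)}$, giving $(P^{(\epsilon)})^{2m}+\widetilde\tau^{(\epsilon)}\in\langle h'\rangle$ for some $m$ and SOS $\widetilde\tau^{(\epsilon)}$; second, $P^{(\epsilon)} = (f-(f_{min}-\epsilon)) - \sum_j\sigma_j^{(\epsilon)}g_j$, where the first summand is nonnegative on $K$. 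The plan is to combine these facts with the hypothesis $I(K) = I(V_\re(h))$ --- which guarantees that a Nullstellensatz certificate for $P^{(\epsilon)}$ reflects its behavior on all of $V_\re(h')$, not just on $K$ --- in order to convert the even-power membership into a first-power membership modulo $Q(g)$, with degrees bounded uniformly in $\epsilon$.

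The parity gap --- even powers from the Nullstellensatz versus single factors in the $\{f_k\}$-certificate --- is the central obstacle. Bridging it without degree blow-up requires exploiting the specific structure of the certificate: the non-SOS part $P^{(\epsilon)}$ is a degree-controlled element of $\langle h\rangle$ with a prescribed nonnegative residue on $K$, and the conversion identities for each $h_i$ are fixed polynomial data. Once the bridge is in place, a careful degree bookkeeping, driven by $k$, by the fixed Nullstellensatz data, and by the degrees of $h$ and $h'$, produces the uniform relaxation order $k'$ and completes the argument.
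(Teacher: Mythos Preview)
Your proposal correctly isolates the heart of the matter---the ``parity gap'' between the even-power identity delivered by the Real Nullstellensatz and the first-power representation you need---but it does not actually bridge it. The final paragraph promises that ``once the bridge is in place'' bookkeeping finishes the job, yet the bridge is precisely what is missing. Two concrete issues make the outline, as written, incomplete.

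First, you apply the Real Nullstellensatz to the $\epsilon$-dependent polynomial $P^{(\epsilon)}=\sum_i\phi_i^{(\epsilon)}h_i$. Even though $\deg P^{(\epsilon)}\le 2k$ is bounded, the Nullstellensatz gives no uniform control on the exponent $m$, on $\widetilde\tau^{(\epsilon)}$, or on the degrees of the $\langle h'\rangle$-representation: these certificates can in principle blow up as $\epsilon\to 0$. The paper avoids this by first proving that under the hypothesis $I(K)=\langle h^{rad}\rangle$ the relaxation actually \emph{achieves} its optimum (this is the closedness statement in Proposition~\ref{sos:achv:opt}), so one obtains a single $p\in\langle h^{rad}\rangle$, independent of $\epsilon$, with $f-f_{min}=p+\sigma_1$. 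The Real Nullstellensatz is then applied once, to this fixed $p$.

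Second, even with a fixed $p$ satisfying $p^{2\ell}+q\in\langle h\rangle$, you still need to manufacture a degree-bounded representation of $p+\epsilon$ in $\langle h\rangle+Q(g)$ for every $\epsilon>0$. The paper's device (Lemma~\ref{lm:real-ideal}) is the elementary identity
\[
p+\epsilon \;=\; -c\,\epsilon^{1-2\ell}\bigl(p^{2\ell}+q\bigr)\;+\;\epsilon\, s_c(p/\epsilon)\;+\;c\,\epsilon^{1-2\ell}q,
\]
where $s_c(t)=1+t+ct^{2\ell}$ is shown to be SOS for $c$ large enough. The first term lies in $\langle h\rangle$, the remaining terms lie in $Q(g)$, and all degrees are bounded independently of $\epsilon$. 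This explicit trick is what closes the parity gap; your proposal contains no substitute for it. Without these two ingredients the argument does not go through.
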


In Theorem~\ref{thm:cvg-ne-h}, the condition $I(K) =I(V_{\re}(h))$
implies that if a polynomial $p$ identically vanishes on $K$
then it also identically vanishes on $V_{\re}(h)$.
It essentially requires that the feasible set $K$
and the real variety $V_{\re}(h)$ have the same Zariski closure.
This is often satisfied.

We would like to remark that there does not exist a similar
result like Theorem~\ref{thm:cvg-ne-h} for the case of inequalities.
That is, the choice of inequality constraining polynomials
might affect finite convergence of Lasserre's hierarchy,
while the feasible set $K$ is not changed.
For instance, consider the problem
\[
\min_{x \in \re} \quad 1-x^2 \quad s.t. \quad 1-x^2  \geq 0.
\]
Clearly, Lasserre's hierarchy for the above converges in one step,
and the problem is equivalent to
\[
\min_{ x \in \re } \quad 1-x^2 \quad s.t. \quad (1-x^2)^3 \geq 0.
\]
However, Lasserre's sequence $\{f_k\}$ for
the above new formulation does not have finite convergence.
Indeed, there exists a constant $C>0$ such that $f_k \leq -C k^{-2}$ for all $k$.
This is implied by Stengle~\cite[Theorem~4]{Stg96}.

This paper is organized as follows.
Section~\ref{sec:fin-var} is mostly to prove Theorem~\ref{thm:fin-reVar};
Section~\ref{sec:gen-var} is mostly to prove Theorem~\ref{thm:cvg-ne-h};
Section~\ref{sec:K-fin} proves that
if only the feasible set $K$ is finite,
then a refined version of Lasserre's hierarchy
(using the preordering of $g$) has finite convergence.

\section{Optimization with finite real varieties} \label{sec:fin-var}
\setcounter{equation}{0}

This section is mostly to prove Theorem~\ref{thm:fin-reVar}.
We begin with a useful lemma.

\begin{lemma} \label{lm:real-ideal}
(i) Let $\ell \geq 1$ be an integer. Then, for all
\[
c\geq  c_0:=\frac{1}{ 2\ell }
\left(1 - \frac{1}{2\ell} \right)^{2\ell-1},
\]
the univariate polynomial $s_c(t):=1+t+c t^{2\ell}$ in $t$ is SOS. \\
(ii) Let $p,q \in \re[x]$ and $\ell \geq 1$ be an integer.
Then, for all $\eps >0$ and $c \in \re$,
\[
p + \eps = \phi_\eps + \theta_\eps,
\]
where
\[
\phi_\eps = - c \eps^{1-2\ell} ( p^{2\ell} + q ), \quad
\theta_\eps = \eps s_c( p/\eps ) + c \eps^{1-2\ell} q.
\]
(iii) In (ii), assume $c \geq c_0$ as in (i),
$p^{2\ell}+q \in \langle h \rangle$
and $q \in Q(g)$ for polynomial tuples $h,g$.
Then, there exists an integer $N>0$ such that, for all $\eps>0$,
\[
\phi_\eps \in \langle h \rangle_{2N}, \quad \theta_\eps \in Q_N(g).
\]
\end{lemma}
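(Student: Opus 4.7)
\smallskip

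\noindent\textbf{Proof proposal for Lemma~\ref{lm:real-ideal}.}

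For part (i), the plan is to invoke the classical fact that a univariate polynomial of even degree with real coefficients is SOS if and only if it is nonnegative on $\re$. So I need only show $s_c(t)\ge 0$ for all $t\in\re$ whenever $c\ge c_0$. For $t\ge 0$ the bound $s_c(t)\ge 1$ is immediate. For $t<0$, set $t=-u$ with $u>0$ and minimize
\[
F(u)\,:=\,1-u+cu^{2\ell}
\]
over $u>0$ using calculus: $F'(u)=-1+2\ell c u^{2\ell-1}=0$ gives the unique critical point $u_{*}=(2\ell c)^{-1/(2\ell-1)}$, which is a minimum. Substituting back, $F(u_*)=1-u_*\bigl(1-\tfrac{1}{2\ell}\bigr)$, and the inequality $F(u_*)\ge 0$ rearranges exactly to $c\ge\tfrac{1}{2\ell}\bigl(1-\tfrac{1}{2\ell}\bigr)^{2\ell-1}=c_0$. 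So for $c\ge c_0$ the polynomial $s_c$ is nonnegative, hence SOS.

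Part (ii) is a direct algebraic identity that I would simply verify by expansion: the terms $\pm c\eps^{1-2\ell}q$ cancel, and $\eps s_c(p/\eps)=\eps+p+c\eps^{1-2\ell}p^{2\ell}$ cancels the remaining $-c\eps^{1-2\ell}p^{2\ell}$, leaving $p+\eps$.

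Part (iii) is where degrees must be tracked uniformly in $\eps$. By (i) I fix an SOS decomposition $s_c(t)=\sum_i\sigma_i(t)^2$ in $\re[t]$ with $\deg\sigma_i\le\ell$; substituting $t=p(x)/\eps$ and using $\eps>0$, I rewrite
\[
\eps\,s_c\!\left(\frac{p}{\eps}\right)\,=\,\sum_i\bigl(\sqrt{\eps}\,\sigma_i(p/\eps)\bigr)^{2},
\]
which is SOS in $\re[x]$ of total degree at most $2\ell\deg(p)$, independently of $\eps$. Then, fixing representations $q=\sum_j\tau_j g_j$ (each $\tau_j$ SOS, $\deg(\tau_j g_j)\le 2N_2$) from $q\in Q(g)$ and $p^{2\ell}+q=\sum_i\psi_i h_i$ (with $\deg(\psi_i h_i)\le 2N_1$) from $p^{2\ell}+q\in\langle h\rangle$, the positive scalar $c\eps^{1-2\ell}$ preserves both memberships: $c\eps^{1-2\ell}q\in Q_{N_2}(g)$ and $-c\eps^{1-2\ell}(p^{2\ell}+q)\in\langle h\rangle_{2N_1}$. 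Choosing $N:=\max\{N_1,N_2,\ell\deg(p)\}$ then yields $\phi_\eps\in\langle h\rangle_{2N}$ and $\theta_\eps\in Q_N(g)$ for \emph{every} $\eps>0$.

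The only genuinely delicate point is the derivation of the sharp constant $c_0$ in part (i); the rest is an identity plus careful bookkeeping to see that the $\eps$-dependence enters only through positive scalar multipliers in front of fixed SOS/ideal representations, so that no degree grows with $\eps\to 0^+$.
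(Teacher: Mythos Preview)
Your proposal is correct and follows essentially the same line as the paper's proof: part (i) is the same convexity/critical-point computation (your substitution $t=-u$ is cosmetic), part (ii) is the same direct expansion, and part (iii) is the same bookkeeping argument that the $\eps$-dependence enters only as positive scalars in front of fixed representations. Your degree bound $N\ge \ell\deg(p)$ for the SOS term $\eps\, s_c(p/\eps)$ is in fact the right one.
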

\begin{proof}
(i) For all $c>0$, the univariate polynomial $s_c(t)$
is convex in $t$ over the real line $\re$
and $s_c^{\prm}(t)= 1 + 2\ell c t^{2\ell-1}$.
The polynomial $s_c$ has a unique real critical point
$\xi:=\left(\frac{-1}{2\ell c}\right)^{\frac{1}{2\ell-1}}$.
Note that
\[
s_c(\xi) = 1 + \left(\frac{-1}{2\ell c}\right)^{\frac{1}{2\ell-1}}
\left(
1 -  \frac{1}{2\ell}
\right).
\]
It can be verified that $s_c(\xi) \geq 0$
if and only if $c\geq c_0$. So, when $c\geq c_0$,
the univariate polynomial $s_c$ is nonnegative over $\re$
(because $s_c(\xi)\geq 0$, $s_c^{\prm}(\xi) = 0$ and $s_c$ is convex),
and it must be SOS (cf.~\cite{Rez00}).

(ii) It can be done by a direct verification.

(iii) By assumption, there exist positive integers $N_1, N_2$ such that
$p^{2\ell}+q \in \langle h \rangle_{2N_1}$, $q \in Q_{N_2}(g)$.
Let $N_0 = \ell \lceil \deg(p)/2 \rceil$.
Note that $s_c(p/\eps)$ is SOS by (i) and its degree is at most $2N_0$.
So, $\eps s_c(p/\eps) \in Q_{N_0}(g)$ for all $\eps>0$.
Then $N:=\max(N_0,N_1,N_2)$ works for the proof.
\end{proof}

Theorem~\ref{thm:fin-reVar} can be proved by using Lemma~\ref{lm:real-ideal}.

\begin{proof}[Proof of Theorem~\ref{thm:fin-reVar}]
When $V_{\re}(h)$ is empty, the feasible set $K$ is also empty,
and hence $f_{min} = + \infty$ by convention.
By Positivstellensatz (cf.~\cite[Theorem~4.4.2]{BCR}),
we have $-1 \in \langle h \rangle + \Sig \re[x]^2$.
For all $\gamma >0$, it holds that
\[
f - \gamma = (1+f/4)^2 + (-1) ( \gamma + (1-f/4)^2) \in
\langle h \rangle_{2k} + Q_k(g),
\]
for all $k$ big enough. So, for all big $k$, \reff{sos:Put}
is unbounded from above, and hence $f_k = +\infty$.
Hence, Lasserre's hierarchy has finite convergence.

When $V_{\re}(h)$ is nonempty and finite, we can write $V_{\re}(h)=\{u_1, \ldots, u_D\}$
for distinct points $u_1,\ldots,u_D \in \re^n$.
Let $\varphi_1, \ldots, \varphi_D \in \re[x]$
be the interpolating polynomials such that
$\varphi_i(u_j) = 0$ for $i \ne j$ and $\varphi_i(u_j) = 1$ for $i = j$.
For each $u_i$, if $f(u_i) - f_{min} \geq 0$,
let $a_i := (f(u_i) - f_{min}) \varphi_i^2$.
If $f(u_i) - f_{min} < 0$, then at least one of
$g_1(u_i),\ldots, g_{m_2}(u_i)$ is negative, say, $g_{j_i}(u_i) < 0$,
and let
\[
a_i := \left(\frac{f(u_i) - f_{min}}{g_{j_i}(u_i)}\right)
g_{j_i} \varphi_i^2.
\]
Each $a_i$ is a polynomial in $Q(g)$.
Let $a := a_1+ \cdots + a_D$.
By construction, $a \in Q_{N_1}(g)$ for some integer $N_1 > 0$.
The polynomial
\[
\hat{f}:=f - f_{min} - a
\]
vanishes identically on $V_{\re}(h)$.
By Real Nullstellensatz (cf.~\cite[Corollary~4.1.8]{BCR}),
there exist an integer $\ell>0$ and $q \in \Sig\re[x]^2$ such that
\[
\hat{f}^{2\ell} + q \in  \langle h \rangle.
\]
Apply Lemma~\ref{lm:real-ideal} to
$p := \hat{f}, q$, with the tuples $h,g$ and any $c \geq \frac{1}{2\ell}$.
Then, there exists $N \geq N_1$ such that, for all $\eps >0$,
\[
\hat{f} + \eps = \phi_\eps + \theta_\eps,
\]
and $\phi_\eps \in \langle h \rangle_{2N}$, $\theta_\eps \in Q_N(g)$.
Therefore, we get
\[
f - (f_{min} -\eps) = \phi_\eps + \sig_\eps,
\]
where $\sig_\eps = \theta_\eps +a \in Q_N(g)$ for all $\eps >0$.
This implies that, for all $\eps>0$,
$\gamma = f_{min}-\eps$ is feasible in \reff{sos:Put} for the order $N$.
Thus, we get $f_N \geq f_{min}$.
Note that $f_k \leq f_{min}$ for all $k$ and $\{f_k\}$
is monotonically increasing.
So, we must have $f_k = f_{min}$ for all $k \geq N$,
i.e., Lasserre's hierarchy has finite convergence.
\end{proof}

We present some examples to show the proof of Theorem~\ref{thm:fin-reVar}.

\begin{exm}
Consider the optimization problem
\[
\left\{ \baray{rl}
\min &  f(x):=x_1x_2 \\
 s.t. & h(x):=(x_1^2-1)^2+(x_2^2-1)^2=0, \\
 &  g(x):= x_1+x_2-1 \geq 0.
\earay \right.
\]
Clearly, $V_{\re}(h) =\{(\pm 1, \pm 1)\}$,
$K=\{(1,1)\}$ and $f_{min}=1$. Let
\[
a = \half (x_1+x_2-1) (x_1-x_2)^2 \in Q_2(g),
\]
\[
\hat{f} = f-1-a=
\half
\left[(x_2^2-1)(x_1-x_2+1) - (x_1^2-1)(x_1-x_2-1) \right].
\]
Then, $\hat{f} \equiv 0$ on $V_{\re}(h)$ and
\[
\hat{f}^2 + q =
\half ( (x_1-x_2)^2 + 1 ) h
\in \langle h \rangle_6,
\]
where
\[
q = \frac{1}{4}
\left( (x_1^2-1)(x_1-x_2+1) + (x_2^2-1)(x_1-x_2-1)  \right)^2.
\]
For each $\eps>0$, let
\[
\phi_\eps  = -  \frac{1}{4\eps}
(\hat{f}^2+ q) \in \langle h \rangle_6, \quad
\sig_\eps = \eps \left(1+ \frac{\hat{f}}{2\eps} \right)^2
+ \frac{1}{4\eps} q + a \in Q_3(g).
\]
Then, $f - 1 + \eps = \phi_\eps + \sig_\eps$ for all $\eps>0$.
So, $f_k =1$ for all $k \geq 3$.
\qed
\end{exm}

\begin{exm} \label{emp:sum:h2d=0}
Let $f\in \re[x]$ be such that $f(0)=0$. Consider the problem
\[
\left\{ \baray{rl}
\min & f(x) \\
s.t. &
h(x):=x_1^{2d} + \cdots + x_n^{2d} = 0.
\earay \right.
\]
Clearly, $f_{min}=0$. There are no inequality constraints,
and we can think that $g=0$, as in \reff{pop:gen}. Write $f$ as
\[
f = x_1b_1 + \cdots + x_nb_n, \quad
b_1, \ldots, b_n \in \re[x].
\]
Let $\Sig_{n,2d}$ be the cone of SOS forms in $n$ variables and of degree $2d$.
There exists $\lmd >0$ such that
\[
\lmd (t_1^{2d}+\cdots+t_n^{2d}) -
(t_1+\cdots+t_n)^{2d} \in \Sig_{n,2d}.
\]
This is because $t_1^{2d}+\cdots+t_n^{2d}$ lies in the interior
of $\Sig_{n,2d}$ (cf. \cite[Proposition~5.3]{Mar09}).
By replacing each $t_i$ by $x_ib_i$ in the above, we know that
\[
\psi: =
\lmd((x_1b_1)^{2d}+\cdots+(x_nb_n)^{2d}) - f^{2d} \in \Sig\re[x]^2.
\]
Clearly, it holds that
\[
\eta := \lmd
\left[\left( \sum_{i=1}^n x_i^{2d} \right)\left(\sum_{i=1}^n b_i^{2d}\right)
- \left( \sum_{i=1}^n (x_ib_i)^{2d} \right) \right] \in \Sig\re[x]^2,
\]
\[
f^{2d} + \psi + \eta =
\lmd (x_1^{2d}+\cdots+x_n^{2d})(b_1^{2d}+\cdots+b_n^{2d})
\in \langle h \rangle.
\]
Let $q := \psi+\eta \in \Sig\re[x]^2$.
Clearly, $f \equiv 0$ on $V_{\re}(h)$, and $f^{2d} + q \in \langle h \rangle$.
Suppose $\deg(f) = r$. Apply Lemma~\ref{lm:real-ideal} with $c = \frac{1}{2d}$,
$\ell = d$ and $p =f$. For each $\eps>0$, let
\[
\phi_\eps = - \frac{1}{2d} \eps^{1-2d} (f^{2d}+q)
\in \langle h \rangle_{2dr},
\]
\[
\sig_\eps = \eps \left(1+f/\eps+ \frac{1}{2d} (f/\eps)^{2d} \right)
+ \frac{1}{2d} \eps^{1-2d} q \in Q_{dr}(0).
\]
Then, $f+\eps = \sig_\eps  + \phi_\eps$ for all $\eps>0$.
So, $f_k =0$ for all $k\geq dr$.
\qed
\end{exm}

We would like to remark that the SOS relaxation \reff{sos:Put}
might not achieve its optimal value $f_k$ for any order $k$,
even if $\{f_k\}$ has finite convergence to $f_{min}$.
For instance, consider the problem
\[
\min \quad x_1 \quad s.t. \quad  x_1^2+x_2^2+\cdots+x_n^2= 0.
\]
By Example~\ref{emp:sum:h2d=0}, we know
$f_{k} = 0$ for all $k\geq 1$. However,
for any $\phi \in \re[x]$, the polynomial
$
\varphi =x_1 - (x_1^2+x_2^2+\cdots+x_n^2)\phi
$
cannot be SOS (because $\varphi(0)=0, \nabla \varphi(0) \ne 0$,
and $0$ can not be a minimizer of $\varphi$).
For this problem, \reff{sos:Put} does not have a maximizer,
for any order $k\geq 1$.

However, in Theorem~\ref{thm:fin-reVar}, if
the ideal $\langle h \rangle$ is {\it real},
i.e., $\langle h \rangle = I(V_{\re}(h))$ (cf. \cite[Section~4.1]{BCR}),
then \reff{sos:Put} achieves its optimum for all big $k$.

\begin{prop} \label{fiV:sos-opt}
In Theorem~\ref{thm:fin-reVar}, if, in addition,
the ideal $\langle h \rangle$ is {\it real},
then \reff{sos:Put} achieves its optimum for all $k$ big enough.
\end{prop}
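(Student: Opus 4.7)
The plan is to revisit the proof of Theorem~\ref{thm:fin-reVar} and observe that the technical detour through Real Nullstellensatz plus Lemma~\ref{lm:real-ideal} is needed only to express a polynomial vanishing on $V_{\re}(h)$ as an element of $\langle h \rangle_{2N}$ modulo $Q_N(g)$, up to an $\eps$-perturbation. When $\langle h \rangle$ is a real ideal, no such perturbation is required, and the representation becomes exact; this is what forces \reff{sos:Put} to attain its optimum.

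More precisely, I would reuse the construction from the proof of Theorem~\ref{thm:fin-reVar}. Assuming $V_{\re}(h)=\{u_1,\ldots,u_D\}$ is nonempty (the empty case already gives $f_k=+\infty$ attained trivially), build the interpolators $\varphi_i$ and form
\[
a \,=\, \sum_{i=1}^D a_i \,\in\, Q_{N_1}(g),
\]
exactly as before, so that $a(u_i)=f(u_i)-f_{min}$ for every $i$. Then the polynomial
\[
\hat f \,:=\, f - f_{min} - a
\]
vanishes identically on $V_{\re}(h)$, i.e.\ $\hat f \in I(V_{\re}(h))$. Under the additional hypothesis $\langle h\rangle = I(V_{\re}(h))$, this gives $\hat f \in \langle h\rangle$ directly, with no Real Nullstellensatz and no $\eps$-perturbation. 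Hence there exist polynomials $\phi_1,\ldots,\phi_{m_1}\in\re[x]$ such that $\hat f = \sum_i \phi_i h_i$, and by choosing $N_2 \geq \max_i \lceil \deg(\phi_i h_i)/2 \rceil$ we get $\hat f \in \langle h\rangle_{2N_2}$.

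Setting $N:=\max(N_1,N_2)$, we obtain the exact identity
\[
f - f_{min} \,=\, \hat f + a, \qquad \hat f \in \langle h\rangle_{2N}, \quad a \in Q_N(g).
\]
Therefore $\gamma=f_{min}$ is feasible for \reff{sos:Put} at relaxation order $N$, which means $f_N \geq f_{min}$; combined with $f_k\leq f_{min}$ and monotonicity we conclude $f_k=f_{min}$ and that $\gamma=f_{min}$ is a maximizer of \reff{sos:Put} for every $k\geq N$.

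There is no real obstacle here beyond recognizing that the real ideal hypothesis removes the need for the $\eps$-perturbation machinery of Lemma~\ref{lm:real-ideal}; the only bookkeeping is to verify the two degree bounds (one for the SOS/quadratic-module part $a$, one for the ideal representation of $\hat f$) and take their maximum, which is routine.
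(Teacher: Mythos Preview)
Your proposal is correct and follows essentially the same approach as the paper: reuse the polynomial $a\in Q_{N_1}(g)$ from the proof of Theorem~\ref{thm:fin-reVar}, observe that $\hat f = f-f_{min}-a$ vanishes on $V_{\re}(h)$ and hence lies in $\langle h\rangle$ by the real-ideal hypothesis, and conclude that $\gamma=f_{min}$ is feasible in \reff{sos:Put} for large $k$. Your version is slightly more explicit about the degree bookkeeping (taking $N=\max(N_1,N_2)$), but the argument is identical in substance.
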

\begin{proof}
Let $a$ be from the proof of Theorem~\ref{thm:fin-reVar}.
We know that $\hat{f} = f-f_{min}-a$ identically vanishes on
$V_{\re}(h)$. So, $\hat{f} \in I(V_{\re}(h))$.
Since $\langle h \rangle$ is real, $I(V_{\re}(h)) = \langle h \rangle$
and $\hat{f} \in \langle h \rangle$.
The identity $f-f_{min} = a + \hat{f}$
implies that $\gamma = f_{min}$ is feasible in \reff{sos:Put}
if $k$ is big enough. Thus, \reff{sos:Put} achieves its optimum
$f_{min}$ for all big $k$.
\end{proof}

When $V_{\re}(h)$ is not finite,
the conclusion of Proposition~\ref{fiV:sos-opt}
also holds under some other conditions.

\begin{prop} \label{sos:achv:opt}
Let $h$ and $K$ be as in \reff{pop:gen}.
Suppose that $f_{min}$ is finite and Lasserre's hierarchy has finite convergence.
If $\langle h \rangle = I(K)$,
then \reff{sos:Put} achieves its optimum for all $k$ big enough.
\end{prop}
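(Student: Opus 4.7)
The plan is to extend the argument of Proposition~\ref{fiV:sos-opt}: produce a polynomial $a \in Q(g)$ such that $f - f_{min} - a \in \langle h \rangle$. Granted such an $a$, the identity $f - f_{min} = a + (f - f_{min} - a)$ shows that $\gamma = f_{min}$ is feasible for \reff{sos:Put} at any order $k$ large enough that $a \in Q_k(g)$ and $f - f_{min} - a \in \langle h \rangle_{2k}$, which is what the proposition asserts.

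First observe that the hypothesis $\langle h \rangle = I(K)$ makes $\langle h \rangle$ a real ideal: the inclusions $I(V_{\re}(h)) \subseteq I(K) = \langle h \rangle \subseteq I(V_{\re}(h))$ force $\langle h \rangle = I(V_{\re}(h))$. Consequently, $f - f_{min} - a \in \langle h \rangle$ is equivalent to $a|_K = (f - f_{min})|_K$ (equivalently, $a$ agrees with $f-f_{min}$ on $V_{\re}(h)$). So the problem reduces to building some $a \in Q(g)$ that agrees with $f - f_{min}$ on $K$.

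To construct $a$, invoke the finite convergence hypothesis: fix an order $k_0$ with $f_{k_0} = f_{min}$. For each $n \geq 1$ there exist $\phi_n \in \langle h \rangle_{2k_0}$ and $\sigma_n \in Q_{k_0}(g)$ with
\[
f - f_{min} + 1/n = \phi_n + \sigma_n.
\]
Since $\phi_n \in \langle h \rangle = I(K)$ vanishes on $K$, restricting to $K$ gives $\sigma_n|_K = (f - f_{min})|_K + 1/n$. The plan is to take $\sigma_n$ to be, for instance, the minimum-norm element of the closed convex set $Q_{k_0}(g) \cap \bigl((f - f_{min} + 1/n) + \langle h \rangle_{2k_0}\bigr)$, then extract a convergent subsequence $\sigma_n \to a$ in the finite-dimensional space $\re[x]_{2k_0}$. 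Since $Q_{k_0}(g)$ is closed, $a \in Q_{k_0}(g)$, and by continuity $a|_K = \lim_n \sigma_n|_K = (f - f_{min})|_K$, as desired; setting $\hat{f} = f - f_{min} - a$, we then have $\hat{f}|_K = 0$, hence $\hat{f} \in I(K) = \langle h \rangle$, and $\hat{f} \in \langle h \rangle_{2k}$ for $k$ sufficiently large.

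The main obstacle is showing that the sequence $\{\sigma_n\}$ is bounded in $\re[x]_{2k_0}$, equivalently, that the image of $Q_{k_0}(g)$ in the quotient $\re[x]_{2k_0}/\langle h \rangle_{2k_0}$ is closed at $[f - f_{min}]$. The natural route is by contradiction: if $\|\sigma_n\| \to \infty$, then after extracting a subsequence the normalizations $\sigma_n/\|\sigma_n\|$ converge to a unit-norm $b \in Q_{k_0}(g) \cap \langle h \rangle_{2k_0}$ (the latter because $[\sigma_n] = [f-f_{min}+1/n]$ stays bounded in the quotient), and one seeks to contradict the minimum-norm property of $\sigma_n$. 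The subtle point is that the hypothesis $\langle h \rangle = I(K)$ only tells us $b$ vanishes on $K$ and does not automatically provide $-b \in Q_{k_0}(g)$; pushing the argument through may require enlarging the relaxation order to some $k \geq k_0$ (which the statement permits) so that enough room opens up in $Q_k(g)$ for the minimum-norm perturbation argument to succeed. Once this boundedness is secured, the plan above produces the required decomposition and the conclusion follows.
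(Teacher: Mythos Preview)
Your approach follows the same overall shape as the paper's: both pass to the limit from the near-optimal representations $f-(f_{min}-\eps)\in\langle h\rangle_{2N_1}+Q_{N_1}(g)$ to an exact representation at $\gamma=f_{min}$. The essential difference is that the paper does not try to prove the needed closure property from scratch; it simply cites it. The paper invokes the result (Laurent~\cite[Theorem~3.35]{Lau}, Marshall~\cite[Theorem~3.1]{Mar03}) that when $\langle h\rangle=I(K)$ the image of $Q_k(g)$ in $\re[x]/\langle h\rangle$ is closed. Granted this, the conclusion is immediate: since each $f-\gamma_i$ lies in $Q_{N_1}(g)+\langle h\rangle$ and $f-\gamma_i\to f-f_{min}$, closedness gives $f-f_{min}=\sigma^*+\phi^*$ with $\sigma^*\in Q_{N_1}(g)$ and $\phi^*\in\langle h\rangle$; then choose $N_2\geq N_1$ with $\phi^*\in\langle h\rangle_{2N_2}$.

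Your proposal, by contrast, tries to establish the boundedness of $\{\sigma_n\}$ directly, and you correctly flag this as the ``main obstacle.'' But the obstacle is not actually overcome: the sketched contradiction (normalize, extract a unit $b\in Q_{k_0}(g)\cap\langle h\rangle_{2k_0}$, contradict the minimum-norm choice of $\sigma_n$) stalls exactly where you say, because $Q_{k_0}(g)$ is a cone rather than a subspace and there is no reason $\sigma_n-tb$ should remain in it. The suggestion to enlarge the relaxation order is vague and does not visibly fix this. What you are in effect attempting is to reprove the Marshall--Laurent closedness theorem, and your sketch does not yet use the hypothesis $\langle h\rangle=I(K)$ in the substantive way that proof requires. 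So as written there is a genuine gap. The cleanest repair is to cite the closedness result and proceed as the paper does; your observation that $\langle h\rangle$ is then a real ideal, and your construction of the limit $a\in Q_{k_0}(g)$ once boundedness is known, are correct and match the paper's argument.
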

\begin{proof}
There exists $N_1$ such that $f_k = f_{min}$ for all $k\geq N_1$.
By the condition that $I(K)=\langle h \rangle$,
we know the quotient set $Q_k(g)/\langle h \rangle$ is closed for all $k$
(cf. Laurent~\cite[Theorem~3.35]{Lau} or Marshall~\cite[Theorem~3.1]{Mar03}).
Let $\{\gamma_i\}_{i=1}^\infty$ be a sequence such that
each $\gamma_i$ is feasible for \reff{sos:Put}
with $k=N_1$ and $\gamma_i \to f_{min}$ as $i \to \infty$.
Clearly, each $f-\gamma_i \in Q_{N_1}(g)/\langle h \rangle$
and $f-\gamma_i \to f - f_{min}$.
Hence, $f-f_{min} \in Q_{N_1}(g)/\langle h \rangle$, i.e.,
there exists $\phi^* \in  \langle h \rangle$ and $\sig^* \in Q_{N_1}(g)$
such that
\[
f-f_{min} =  \phi^* + \sig^*.
\]
Let $N_2 \geq N_1$ be such that $\phi^* \in \langle h \rangle_{2N_2}$.
Then, $\gamma = f_{min}, \phi^*, \sig^*)$ is feasible for \reff{sos:Put}
with order $k \geq N_2$.
Hence, \reff{sos:Put} achieves its optimum for all $k\geq N_2$.
\end{proof}

\section{Optimization with general real varieties}  \label{sec:gen-var}
\setcounter{equation}{0}

This section is mostly to prove Theorem~\ref{thm:cvg-ne-h}.
We first prove a result that similar to Theorem~\ref{thm:cvg-ne-h}
by using generators of the real radical $I(V_{\re}(h))$.

Let $h^{rad}_1,\ldots,h^{rad}_t$ be a set of generators for $I(V_{\re}(h))$, i.e.,
\[
I(V_{\re}(h)) = \langle h^{rad}_1,\ldots,h^{rad}_t \rangle.
\]
Denote $h^{rad}:=(h^{rad}_1,\ldots,h^{rad}_t)$.
Define $\langle h^{rad} \rangle_{2k}$ similarly as for $\langle h \rangle_{2k}$.
Clearly, \reff{pop:gen} is equivalent to
\be  \label{pop:rad-poly}
\left\{\baray{rl}
\min & f(x) \\
s.t. & h^{rad}_i(x) = 0 \,(i=1,\ldots,t),  \\
 & g_j(x) \geq  0 \,(j=1,\ldots,m_2).
\earay \right.
\ee
Lasserre's hierarchy for \reff{pop:rad-poly} is
the sequence of SOS relaxations ($k\in \N$)
\be  \label{sos:re-rad}
f^{rad}_k := \, \max   \quad \gamma \quad
s.t. \quad f - \gamma  \in \langle h^{rad} \rangle_{2k} + Q_k(g).
\ee
We also have $f^{rad}_k \leq f_{min}$ for all $k$.

\begin{theorem}  \label{fincvg=Rad}
Let $h$, $f_{min}$ and $K$ be as in \reff{pop:gen}.
Suppose that $f_{min}$ is finite and $I(K)=I(V_{\re}(h)) = \langle h^{rad}\rangle$.
Let $f_k$ (resp., $f^{rad}_k$) be the optimal value of
\reff{sos:Put} (resp., \reff{sos:re-rad}).
Then, the sequence $\{f_k\}$ has finite convergence to $f_{min}$
if and only if $\{f^{rad}_k\}$ has finite convergence to $f_{min}$.
\end{theorem}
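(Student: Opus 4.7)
The plan is to prove the two implications separately. The forward direction is a routine degree-tracking observation, while the reverse direction mimics the architecture of the proof of Theorem~\ref{thm:fin-reVar}, using Proposition~\ref{sos:achv:opt} to reduce to an exact decomposition and then pulling it back from $\langle h^{rad}\rangle$ to $\langle h\rangle$ via the Real Nullstellensatz and Lemma~\ref{lm:real-ideal}.

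For the forward direction, observe that $V_{\re}(h) = V_{\re}(h^{rad})$ because $h^{rad}$ generates $I(V_{\re}(h))$ and $V_{\re}(h)$ is already a real variety. In particular each $h_i$ vanishes on $V_{\re}(h^{rad})$, so $h_i \in \langle h^{rad}\rangle$. Fixing once and for all a representation $h_i = \sum_j \psi_{ij} h_j^{rad}$ produces a constant $D := \max_{i,j}\bigl(\deg(\psi_{ij}) + \deg(h_j^{rad})\bigr)$ with the property that $\langle h \rangle_{2k} \subseteq \langle h^{rad} \rangle_{2k+D}$ for every $k$, by substituting and using $\deg(a_i) \leq 2k - \deg(h_i) \leq 2k$. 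Therefore any feasible $\gamma$ in \reff{sos:Put} at order $k$ is feasible in \reff{sos:re-rad} at order $k + \lceil D/2\rceil$, so finite convergence transfers from $\{f_k\}$ to $\{f_k^{rad}\}$.

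For the reverse direction, since $I(K) = \langle h^{rad}\rangle$ and $\{f_k^{rad}\}$ converges finitely, Proposition~\ref{sos:achv:opt} applied to problem \reff{pop:rad-poly} produces an order $N$ at which \reff{sos:re-rad} attains its optimum $f_{min}$. Thus there exist $\phi \in \langle h^{rad}\rangle_{2N}$ and $\sigma \in Q_N(g)$ with
\[
f - f_{min} = \phi + \sigma.
\]
Set $\hat{f} := f - f_{min} - \sigma = \phi \in \langle h^{rad}\rangle = I(V_{\re}(h))$, so $\hat{f}$ vanishes identically on $V_{\re}(h)$. By the Real Nullstellensatz there are an integer $\ell \geq 1$ and $q \in \Sigma\re[x]^2$ with $\hat{f}^{2\ell} + q \in \langle h\rangle$. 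Applying Lemma~\ref{lm:real-ideal}(ii)--(iii) to $p = \hat{f}$ with $c = 1/(2\ell)$ yields an integer $N'$ such that, for every $\eps > 0$,
\[
\hat{f} + \eps = \phi_\eps + \theta_\eps, \qquad \phi_\eps \in \langle h\rangle_{2N'}, \quad \theta_\eps \in Q_{N'}(g).
\]
Adding $\sigma$ gives $f - (f_{min} - \eps) = \phi_\eps + (\theta_\eps + \sigma)$ with $\theta_\eps + \sigma \in Q_{\max(N,N')}(g)$, so $\gamma = f_{min} - \eps$ is feasible in \reff{sos:Put} at order $k := \max(N,N')$ for every $\eps > 0$. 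Combined with $f_k \leq f_{min}$, this gives $f_k = f_{min}$.

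The substantive step is the reverse direction. The essential point is that Proposition~\ref{sos:achv:opt} converts the asymptotic behaviour of $\{f_k^{rad}\}$ into an exact identity $f - f_{min} = \phi + \sigma$, which is what allows $\hat{f}$ to lie inside $I(V_{\re}(h))$ and therefore to be amenable to the Real Nullstellensatz plus Lemma~\ref{lm:real-ideal} construction already used for Theorem~\ref{thm:fin-reVar}. The only prerequisite to verify is that Proposition~\ref{sos:achv:opt} applies to the $h^{rad}$-formulation, which requires $I(K) = \langle h^{rad}\rangle$ --- precisely the hypothesis of the theorem. Without exact attainment one would only get an $\eps$-decomposition $\hat f_\eps \in \langle h^{rad}\rangle$, and pulling such an $\eps$-family uniformly back to $\langle h\rangle$ is exactly the kind of obstruction that fails for inequalities (as in the paper's $1 - x^2$ example).
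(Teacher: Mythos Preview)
Your proof is correct and follows essentially the same route as the paper: the easy direction is the degree-tracking inclusion $\langle h\rangle_{2k}\subseteq\langle h^{rad}\rangle_{2M}$ coming from $h_i\in\langle h^{rad}\rangle$, and the substantive direction uses Proposition~\ref{sos:achv:opt} (with $I(K)=\langle h^{rad}\rangle$) to obtain an exact decomposition $f-f_{min}=\phi+\sigma$, then applies the Real Nullstellensatz to $\phi\in I(V_{\re}(h))$ and Lemma~\ref{lm:real-ideal} to pull the representation back to $\langle h\rangle$. The paper's argument is identical in structure; only the names differ (the paper writes $p$ for your $\hat f=\phi$ and $\sigma_1$ for your $\sigma$).
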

\begin{proof}
First, assume that $\{f^{rad}_k\}$ has finite convergence to $f_{min}$.
The feasible set of \reff{pop:rad-poly} is $K$
and $\langle h^{rad}\rangle = I(K)$.
%
%
%
Apply Proposition~\ref{sos:achv:opt} to Lasserre's sequence
$\{f^{rad}_k\}$ for \reff{pop:rad-poly} with the tuple $h^{rad}$.
We know that \reff{sos:re-rad} achieves its optimum $f_{min}$
for all big $k$, say, for all $k\geq N_1$.
Let $p \in  \langle h^{rad} \rangle_{2N_1}$ and $\sig_1 \in Q_{N_1}(g)$
be such that
\[
f-f_{min} =  p + \sig_1.
\]
Since $\langle h^{rad} \rangle = I(V_{\re}(h))$,
$p \equiv 0$ on $V_{\re}(h)$.
By Real Nullstellensatz (cf.~\cite[Corollary~4.1.8]{BCR}),
there exist an integer $\ell>0$ and $q\in \Sig\re[x]^2$ such that
\[
p^{2\ell} + q  \, \in  \, \langle h \rangle.
\]
By Lemma~\ref{lm:real-ideal},
there exists $N_2>0$ such that, for all $\eps >0$,
\[
p + \eps = \phi_\eps + \theta_\eps,
\]
with $\phi_\eps \in \langle h \rangle_{2N_2}$,
$\theta_\eps \in Q_{N_2}(g)$.
Let $\sig_\eps = \theta_\eps + \sig_1$ and $N_3 = \max(N_1,N_2)$. Then,
\[
f-(f_{min}-\eps) = \sig_\eps  + \phi_\eps, \quad
\sig_\eps \in Q_{N_3}(g), \quad \phi_\eps \in \langle h \rangle_{2N_3}.
\]
Hence, $f_k = f_{min}$ for all $k\geq N_3$, i.e.,
$\{f_k\}$ has finite convergence to $f_{min}$.

\medskip

Second, assume that $\{f_k\}$ has finite convergence to $f_{min}$,
say, $f_k = f_{min}$ for all $k\geq M_1$.
Thus, for every $\eps>0$, there exist
$\phi_\eps \in \langle h \rangle_{2M_1}$,
$\sig_\eps \in Q_{M_1}(g)$ such that
\[
f - (f_{min} - \eps) = \phi_\eps + \sig_\eps.
\]
Note that each $h_i \in I(V_{\re}(h)) = \langle h^{rad} \rangle$.
So, there exists $M_2 \geq M_1$ such that
$\langle h \rangle_{2M_1} \subseteq \langle h^{rad} \rangle_{2M_2}$
and $Q_{M_1}(g) \subseteq Q_{M_2}(g)$.
This implies that $f^{rad}_k \geq f_{min}-\eps$
for all $k\geq M_2$ and for all $\eps >0$.
Hence, $f^{rad}_k \geq f_{min}$ for all $k\geq M_2$.
Since $f^{rad}_k \leq f_{min}$ for all $k$,
we know that $\{f^{rad}_k\}$ has finite convergence to $f_{min}$.
%
%
\end{proof}

Theorem~\ref{thm:cvg-ne-h} can be proved by using Theorem~\ref{fincvg=Rad}.

\begin{proof}[Proof of Theorem~\ref{thm:cvg-ne-h}]
If $f_{min}=-\infty$, then $f_k,f_k^{\prm} \leq f_{min} = -\infty$ for all $k$,
and the conclusion of Theorem~\ref{thm:cvg-ne-h} is clearly true.
If $f_{min}=+\infty$, then $K =\emptyset$ and $I(K) = \re[x]$;
so, $I(V_{\re}(h))=I(K) =\re[x]$ and $V_{\re}(h)=\emptyset$.
The conclusion of Theorem~\ref{thm:cvg-ne-h} is also true,
as shown at the beginning of the proof of Theorem~\ref{thm:fin-reVar}.
%
%
%
%

Now we prove Theorem~\ref{thm:cvg-ne-h} when $f_{min}$ is finite.
Let $h^{rad}$ and $f_k^{rad}$ be as in Theorem~\ref{fincvg=Rad}.
By Theorem~\ref{fincvg=Rad},
$\{f_k\}$ has finite convergence to $f_{min}$
if and only if $\{f^{rad}_k\}$ has finite convergence to $f_{min}$.
For the same reason, since $V_{\re}(h) = V_{\re}(h^\prm)$ and
\reff{pop:gen} is equivalent to \reff{pop:h-prm},
$\{f_k^{\prm}\}$ has finite convergence to $f_{min}$
if and only if $\{f^{rad}_k\}$ has finite convergence to $f_{min}$.
This shows that $\{f_k\}$ has finite convergence to $f_{min}$
if and only if $\{f_k^{\prm}\}$ has finite convergence to $f_{min}$.
\end{proof}

A direct consequence of of Theorem~\ref{thm:cvg-ne-h}
is that we can reduce the number of
equality constraints in polynomial optimization,
while finite convergence of Lasserre's hierarchy is not lost.
As is well known, every real variety can be defined
by a single equation. Let
\[
h^{sq}(x):=h_1^2(x) + \cdots + h_{m_1}^2(x).
\]
Then, \reff{pop:gen} is equivalent to
\be  \label{pop:sqr-eq}
\min \quad f(x) \quad
s.t.  \quad h^{sq}(x) = 0,\, g(x) \geq 0.
\ee
Lasserre's hierarchy for \reff{pop:sqr-eq} is
the sequence of SOS relaxations ($k \in \N$)
\be  \label{las-sos:sqr}
f_k^{sq} := \max \quad \gamma \quad
s.t. \quad  f - \gamma  \in \langle h^{sq} \rangle_{2k} + Q_k(g).
\ee
If $I(K) = I(V_{\re}(h))$, by Theorem~\ref{thm:cvg-ne-h},
$\{f_k\}$ has finite convergence to $f_{min}$
if and only if $\{f^{sq}_k\}$ has finite convergence to $f_{min}$.
We show an example of this.

\begin{exm}
Consider the optimization problem:
\be \label{exm3:sqr}
\left\{ \baray{rl}
\min & f(x):=x_1x_2x_3-2x_3 \\
 s.t. & h^{sq}(x):=(x_1^2-x_2)^2+(x_1^3-x_3)^2=0.
\earay \right.
\ee
It has no inequality constraints, and we can think that $g=0$.
Its feasible set is the curve parameterized as $(x_1, x_1^2, x_1^3)$.
The minimum $f_{min}=-1$.
We show that the sequence $\{f^{sq}_k\}$ for \reff{exm3:sqr}
has finite convergence. Let $\sig_1 = (x_1^3-1)^2$ and
\[
p =  (x_1^3-2)(x_3-x_1^3) + x_1x_3(x_2-x_1^2).
\]
Then, $f + 1 = p + \sig_1$. Clearly, $p \equiv 0$ on $V_{\re}(h^{sq})$ and
\[
p^2 + q = h^{sq} \psi,
\]
where
\[
q = \Big(
x_1x_3(x_3-x_1^3) -(x_1^3-2)(x_2-x_1^2)
\Big)^2, \quad
\psi =  x_1^2x_3^2 +(x_1^3-2)^2.
\]
For all $\eps>0$, we have $f + 1 + \eps = \phi_\eps + \sig_\eps$ where
\[
\phi_\eps = \frac{-1}{4\eps} \psi h^{sq} \in \langle h^{sq} \rangle_{12}, \quad
\sig_\eps = \eps \left(1+ \frac{p}{2\eps}\right)^2
+ \frac{1}{4\eps} q + \sig_1 \in Q_6(0).
\]
So, $f_k^{sq} = -1$ for all $k \geq 6$.
\qed
\end{exm}

We show an application of Theorem~\ref{thm:cvg-ne-h}
in gradient SOS relaxations for minimizing polynomials \cite{NDS06}.
Consider the unconstrained optimization problem
\be  \label{uc:pop}
\min_{x\in \re^n } \quad f(x).
\ee
If \reff{uc:pop} has a minimizer, then it is equivalent to
\be  \label{pop:g(f)=0}
\min_{x\in \re^n } \quad f(x) \quad
s.t. \quad \nabla f(x) = 0.
\ee
When $\langle \nabla f \rangle$ is radical,
Lasserre's hierarchy for \reff{pop:g(f)=0} has finite convergence \cite{NDS06}.
Indeed, the finite convergence also occurs even if
$\langle \nabla f \rangle$ is not radical, as shown in \cite{Nie-jac}.
Clearly, \reff{pop:g(f)=0} is equivalent to
\be  \label{pop:||g(f)||^2=0}
\min_{x\in \re^n } \quad f(x) \quad
s.t. \quad  \|\nabla f(x)\|_2^2 = 0.
\ee
An advantage of \reff{pop:||g(f)||^2=0} over \reff{pop:g(f)=0}
is that  \reff{pop:||g(f)||^2=0} has a single equality constraint.
By Theorem~\ref{thm:cvg-ne-h},
Lasserre's hierarchy of \reff{pop:||g(f)||^2=0} also has finite convergence.

\begin{exm} (\cite{Las01,NDS06})
Consider the polynomial optimization problem
\[
\min_{x\in\re^2} \quad f(x):=x_1^2x_2^2(x_1^2+x_2^2-1).
\]
The minimum $f_{min}=-1/27$ is
achieved at $(\pm 1, \pm 1)/\sqrt{3}$. We have
\[
\pt f/\pt x_1 = 2x_1x_2^2(2x_1^2+x_2^2-1), \quad
\pt f/\pt x_2 =  2x_1^2x_2(x_1^2+2x_2^2-1).
\]
This optimization problem is equivalent to
\[
\min_{x\in\re^2} \quad f(x) \quad s.t. \quad  \| \nabla f(x) \|_2^2 = 0,
\]
where $\| \nabla f(x) \|_2^2$ has the representation
\[
4 x_1^2x_2^2 \Big(
x_2^2(2x_1^2+x_2^2-1)^2 + x_1^2(x_1^2+2x_2^2-1)^2
\Big).
\]
Let $\sig_1 =  3(x_1^2x_2^2-1/9)^2$ and $\hat{f}= f +1/27 - \sig_1$.
Then, $\hat{f} \equiv 0$ on $V_{\re}(\nabla f)$ and
\[
\hat{f}^2 + q = \| \nabla f \|_2^2 \psi,
\]
where $q, \psi$ are SOS polynomials given as
\[
a=x_1^2-1/3, \quad b=x_2^2-1/3,
\]
\[
s_1 = x_1^2x_2^2\Big( x_2^4(2a+b)^2 + x_1^4(a+2b)^2 \Big), \quad
s_2 = 4(a+b)^2(a^2+b^2)+a^4+b^4,
\]
\[
q = \frac{9}{2}(s_1 (a^2+b^2) + x_1^4x_2^4 s_2), \quad
\psi = \frac{9}{8}(x_1^2+x_2^2)(a^2+b^2).
\]
For each $\eps>0$, let
\[
\phi_\eps = -\frac{1}{4\eps} \| \nabla f \|_2^2 \psi
\in \langle \| \nabla f \|_2^2 \rangle_{16}, \quad
\sig_\eps = \eps \left(1+ \hat{f}/2\eps\right)^2
+ \frac{q}{4\eps} + \sig_1 \in Q_8(0).
\]
Then, $f +1/27 +\eps = \sig_\eps + \phi_\eps$ for all $\eps>0$.
So, $f_k^{grad} = -1/27$ for all $k \geq 8$.
\qed
\end{exm}

\section{Optimization over finite semialgebraic sets}  \label{sec:K-fin}
\setcounter{equation}{0}

In this section, we consider the case that
the feasible set $K$ of \reff{pop:gen} is a finite set
while the real variety $V_{\re}(h)$ is not necessarily.
To apply Theorem~\ref{thm:fin-reVar},
a natural idea is to introduce new variables $z_1, \ldots, z_{m_2}$.
Then, $K$ can be equivalently defined by the equations
\[
h(x) = 0, \quad
g_1(x)-z_1^2 = \cdots = g_{m_2}(x)-z_{m_2}^2 = 0.
\]
Clearly, $K$ is a finite set if and only if the above equations
have finitely many real solutions. If $K$ is finite,
by Theorem~\ref{thm:fin-reVar}, Lasserre's hierarchy has finite convergence
if we use the above equivalent polynomial equalities
in both $x_1,\ldots,x_n$ and $z_1, \ldots, z_{m_2}$.
However, this approach introduces new variables $z_1, \ldots, z_{m_2}$,
which typically make the resulting SOS relaxations very difficult to solve.
To get a finitely convergent hiearchy of SOS relaxations
that only uses the original polynomials in $x$,
we need stronger relaxations than \reff{sos:Put}.

Let $Pr_k(g)$ be the $k$-th truncated quadratic module
generated by the set of all possible cross products:
\[
g_1, \,\, \ldots, \,\, g_{m_2}, \, g_1g_2, \,\, \ldots, \,\, g_{m_1-1}g_{m_1},\, \,
\ldots, \,\, g_1g_2\cdots g_{m_2}.
\]
The set $Pr_k(g)$ is also called the $k$-th {\it truncated preordering}
generated by $g=(g_1,\ldots, g_{m_2})$ (cf.~\cite{LasBok,Lau,MarBk}).
Consider the sequence of SOS relaxations ($k\in \N$)
\be  \label{sos:preord}
f_k^{pre} := \max  \quad \gamma \quad
s.t. \quad f - \gamma  \in \langle h \rangle_{2k} + Pr_k(g).
\ee
If $K$ is compact, then $\{f_k^{pre}\}$ asymptotically converges
to $f_{min}$ (cf.~\cite{Las01,Smg}).
When $K$ is finite, the sequence of optimal values of
the dual problem of \reff{sos:preord} has finite convergence,
as shown by Lasserre, Laurent, and Rostalski \cite[Remark~4.9]{LLR08}.
Here, we show that the same result holds for the sequence $\{f_k^{pre}\}$.

\begin{theorem}  \label{thm:finset-pre}
Let $f_k^{pre}, f_{min}$ be as above.
If the feasible set $K$ of \reff{pop:gen} is finite,
then the sequence $\{f_k^{pre}\}$ has finite convergence to $f_{min}$.
\end{theorem}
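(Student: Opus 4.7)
The plan is to mirror the proof of Theorem~\ref{thm:fin-reVar}, but with two substitutions: the interpolation is carried out at points of $K$ (not of $V_{\re}(h)$), and the Real Nullstellensatz for ideals is replaced by Stengle's Positivstellensatz for basic closed semialgebraic sets, which naturally produces a certificate inside the preordering $Pr(g)$ rather than only inside $Q(g)$.

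First I would dispose of the empty case: if $K = \emptyset$, then $f_{min} = +\infty$ by convention, and Stengle's Positivstellensatz (cf.~\cite[Theorem~4.4.2]{BCR}) yields $-1 \in \langle h \rangle + Pr(g)$. The same algebraic trick used at the start of the proof of Theorem~\ref{thm:fin-reVar} then makes every $\gamma > 0$ feasible in \reff{sos:preord} for all large $k$, so $f_k^{pre} = +\infty$ and finite convergence is immediate.

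Assume now $K = \{u_1, \ldots, u_D\}$ is nonempty and finite. Choose interpolating polynomials $\varphi_1, \ldots, \varphi_D \in \re[x]$ with $\varphi_i(u_j) = \delta_{ij}$. Since every $u_i$ is feasible, $f(u_i) \geq f_{min}$, so
\[
a := \sum_{i=1}^D (f(u_i) - f_{min})\, \varphi_i^2 \in \Sig\re[x]^2 \subseteq Pr_{N_1}(g)
\]
for some integer $N_1 > 0$; note that here, unlike in Theorem~\ref{thm:fin-reVar}, no correction by a factor $g_{j_i}$ is required, because feasibility already gives $f(u_i) - f_{min} \geq 0$. The polynomial $\hat{f} := f - f_{min} - a$ then vanishes identically on $K$, so by Stengle's Positivstellensatz there exist an integer $\ell \geq 1$ and $q \in Pr(g)$ with
\[
\hat{f}^{2\ell} + q \in \langle h \rangle.
\]

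The final ingredient is the verbatim extension of Lemma~\ref{lm:real-ideal}(iii) in which $Q(g)$ is replaced by $Pr(g)$: the decomposition in part (ii) is a polynomial identity, and the proof only uses that $Pr(g)$ contains $\Sig\re[x]^2$ (for the term $\eps\, s_c(\hat{f}/\eps)$) and is an increasing union $\cup_k Pr_k(g)$, so that a single degree bound $N \geq N_1$ controls both $\phi_\eps$ and $\theta_\eps$ uniformly in $\eps$. Applying this extension yields, for every $\eps > 0$,
\[
f - (f_{min} - \eps) \;=\; \phi_\eps + \sig_\eps, \qquad
\phi_\eps \in \langle h \rangle_{2N}, \quad \sig_\eps := \theta_\eps + a \in Pr_N(g).
\]
Hence $\gamma = f_{min} - \eps$ is feasible for \reff{sos:preord} at order $N$, so $f_N^{pre} \geq f_{min}$, and monotonicity together with $f_k^{pre} \leq f_{min}$ forces $f_k^{pre} = f_{min}$ for all $k \geq N$. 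The only genuinely new technical point, and therefore the closest thing to an obstacle, is the appeal to Stengle's Positivstellensatz with a preordering certificate (in place of the sparser ideal-valued Real Nullstellensatz used in Section~\ref{sec:fin-var}); once this is invoked, everything reduces to a direct transcription of the Section~\ref{sec:fin-var} argument with $Q(g)$ replaced by $Pr(g)$.
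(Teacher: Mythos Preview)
Your proposal is correct and follows essentially the same approach as the paper's proof: interpolation at the points of $K$ to build the SOS polynomial $a$, Positivstellensatz to get $\hat f^{2\ell}+q\in\langle h\rangle$ with $q$ in the preordering, and then Lemma~\ref{lm:real-ideal} to absorb the $\eps$. The only cosmetic differences are that you treat the empty case explicitly (the paper omits it) and you spell out that Lemma~\ref{lm:real-ideal}(iii) extends verbatim with $Pr(g)$ in place of $Q(g)$, whereas the paper simply applies it without comment.
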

\begin{proof}
The set $K$ consists of finitely many points, say, $u_1, \ldots, u_D \in \re^n$.
Let $\varphi_1, \ldots, \varphi_D \in \re[x]$ be the interpolating polynomials
such that $\varphi_i(u_j)=0$ for $i \ne j$
and $\varphi_i(u_j)=1$ for $i = j$.
Then, $f(u_i) - f_{min} \geq 0$ for all $i$. Let
\[
a := \sum_{i=1}^D (f(u_i) - f_{min}) \varphi_i^2 \in \Sig\re[x]^2.
\]
The polynomial $\hat{f}:=f - f_{min} -a$ vanishes identically on $K$.
By Positivstellensatz (cf.~\cite[Corollary~4.4.3]{BCR}),
there exist integers $\ell>0$ and $N_1>0$ such that
\[
q \in Pr_{N_1}(g), \qquad
\hat{f}^{2\ell} + q  \in \langle h \rangle_{2N_1}.
\]
Applying Lemma~\ref{lm:real-ideal} with $c \geq \frac{1}{2\ell}$ and $p=\hat{f}$,
we get that, for all $\eps >0$,
\[
f-(f_{min}-\eps) = p+\eps+ a = \sig_\eps + \phi_\eps,
\]
\[
\phi_\eps
= -c \eps^{1-2\ell}( \hat{f}^{2\ell} + q ) \in \langle h \rangle_{2N_1},
\]
\[
\sig_\eps = \eps \left(1+\hat{f}/\eps+ c (\hat{f}/\eps)^{2\ell} \right)
+ c \eps^{1-2\ell}q + a.
\]
Let $N \geq N_1$ be such that
$\sig_\eps \in Pr_{N}(g)$ for all $\eps >0$.
Like before, we have $f_k^{pre} = f_{min}$ for all $k\geq N$.
\end{proof}

We illustrate the proof of Theorem~\ref{thm:finset-pre}
with the following example.

\begin{exm}
Consider the optimization problem
\[
\baray{rl}
\min & -x_1^2-x_2^2  \\
s.t. & x_1^3 \geq 0, x_2^3 \geq 0,  -x_1-x_2-x_1x_2 \geq 0.
\earay
\]
Let $f,g_1,g_2,g_3$ be the objective,
the first, second and third constraining polynomials respectively.
Clearly, $K=\{(0,0)\}$ and $f_{min}=0$.
We have $f \equiv 0$ on $K$ and
\[ f^4 + q = 0,\]
where
\[
q =  \sig_0 + g_1 \sig_1 + g_2 \sig_2 +
g_1g_2 \sig_{12} + g_3 \sig_3.
\]
In the above, the SOS polynomials $\sig_0, \sig_1, \sig_2, \sig_{12}, \sig_3$ are given as:
{\smaller
%
%
\[
\sig_0 = (x_1^2-x_2^2)^4+6(x_1^4-x_2^4)^2,  \,
\sig_{12} = 32(x_1^2+x_2^2+x_1^4+x_2^4+x_1^6+x_2^6),
\]
\[
\baray{r}
\sig_1 = 8\Big(   x_1^6(x_2+1/2+4x_2^2) +
x_1^4(x_1^2/2+2x_1x_2+2x_2^2) + \qquad  \\
x_1^4(2x_2+1/2+2x_2^2) + x_1^2(x_1^2/2+x_1x_2+4x_2^2) + 4(x_2^4+x_2^6+x_2^8) \Big),
\earay
\]
\[
\baray{r}
\sig_2 = 8\Big( x_2^6(x_1+1/2+4x_1^2) +
x_2^4(x_2^2/2+2x_2x_1+2x_1^2) + \qquad \\
x_2^4(2x_1+1/2+2x_1^2) + x_2^2(x_2^2/2+x_2x_1+4x_1^2) + 4(x_1^4+x_1^6+x_1^8) \Big),
\earay
\]
\[
\sig_3 = 8\Big( (x_1^8+x_2^8+x_1^7+x_2^7 + x_1^6 + x_2^6)
+4x_1^2x_2^2(x_1^2+x_2^2+x_1^4+x_2^4+x_1^6+x_2^6) \Big) .
\]
}
Apply Lemma~\ref{lm:real-ideal} with $c=1/4$ and $p=f$.
For each $\eps>0$, let
\[
\phi_\eps =0, \quad
\sig_\eps := \eps \left(1+ \frac{f}{\eps} + \frac{f^4}{4\eps^4} \right)
+ \frac{1}{4\eps^3} q \in Pr_6(g).
\]
Then, $f+\eps = \phi_\eps + \sig_\eps$ for all $\eps>0$.
So, $f_k^{pre} = 0$ for all $k \geq 6$.
\qed
\end{exm}

\end{document}